\newtheorem{thm}{Theorem}[section]
\newtheorem{prop}[thm]{Proposition}
\newtheorem{lem}[thm]{Lemma}
\newtheorem*{thm*}{Theorem}
\theoremstyle{definition}
\newtheorem{defn}{Definition}
\newtheorem{conj}{Conjecture}
\newtheorem{rem}[thm]{Remark}
\newtheorem*{defn*}{Definition}
\newtheorem*{rem*}{Remark}
\DeclareMathOperator{\Gr}{Gr}
\DeclareMathOperator{\ncl}{ncl}
\DeclareMathOperator{\rad}{rad}
\DeclareMathOperator{\im}{im}
\newcommand{\BN}{\mathbb{N}}
\newcommand{\BZ}{\mathbb{Z}}
\newcommand{\FF}{\mathbb{F}}
\newcommand{\factor}[2]{{\raise0.7ex\hbox{$#1$} \!\mathord{\left/ {\vphantom {#1 {#2}}}\right.\kern-\nulldelimiterspace}\!\lower0.7ex\hbox{${#2}$}}}
\author[M. Casals-Ruiz]{Montserrat Casals-Ruiz}
\address{Matematika Saila, UPV/EHU, Sarriena s/n, 48940, Leioa - Bizkaia, Spain}
\email{montsecasals@gmail.com}
\author[I. Kazachkov]{Ilya Kazachkov}
\address{Ikerbasque - Basque Foundation for Science and Matematika Saila,  UPV/EHU,  Sarriena s/n, 48940, Leioa - Bizkaia, Spain}
\email{ilya.kazachkov@gmail.com}
\author[V. Remeslennikov]{Vladimir Remeslennikov}
\address{Sobolev Institute of Mathematics (Russian Academy of Sciences), 13 Pevtsova St., Omsk 644099, Russia}
\email{remesl@ofim.oscsbras.ru}
\thanks{The first author is supported by the Juan de la Cierva Programme of the Spanish Government. The second author is supported by the ERC grant PCG-336983. The first two authors are partly supported by the Spanish Government, grant MTM2014-53810-C2-2-P. The third author is supported by the grant of the Russian Fund for Basic Research N14-01-00068.}
\begin{document}
\title{Pro-Hall $R$-groups and groups discriminated by the free pro-$p$ group.}

\subjclass[2010]{Primary 20E18; Secondary 20E06}
\begin{abstract}
In this note we introduce pro-Hall $R$-groups as inverse limits of Hall $R$-groups and show that for the binomial closure $S^{bin}$ of any ring $S$ discriminated by $\BZ_p$, the free pro-Hall $S^{bin}$-group $\FF(A,S^{bin})$ is fully residually free pro-$p$. Furthermore, we prove that any finite set of elements in $\FF(A,S^{bin})$ defines a pro-$p$ subgroup and so an irreducible coordinate group over the free pro-$p$ group.
\end{abstract}

\dedicatory{To the memory of Oleg Melnikov.}

\maketitle

\section{Introduction}

The class of limit groups arises naturally in many different aspects of the theory of free groups and is fundamental in their model-theoretic study. Its versatile nature is reflected in the rich characterisation that this class admits: from the viewpoint of model theory, they are finitely generated models of the universal theory of a free group; from the group-theoretic perspective, limit groups are simply finitely generated fully residually free groups; in universal algebra, they are finitely generated subgroups of the ultrapower of a free group; in algebra-geometric terms, they are coordinate groups of irreducible algebraic sets over a free group. We refer the interested reader to \cite{DMR1} for a detailed discussion of these and some of the other characterisations of the class of limit algebras over an algebraic structure.

\smallskip

In 1960, Lyndon while studying one variable equations over the free group, introduced the exponential group $F^{\BZ[t]}$ and showed that this group is fully residually free, \cite{Lyndon}. 
Hence, the group $F^{\BZ[t]}$ provided a wealth of example of limit groups and was the first tool for their systematic study. In 1996, Miasnikov and Remeslennikov gave a description of the exponential group as iterated sequence of free extensions of centralisers. This description uncovered a very robust algebraic structure of the finitely generated subgroups of the exponential group:  they are built from free abelian groups of finite rank by taking free products, and amalgamated products and separated HNN-extensions with abelian amalgamated subgroups. Furthermore, in the same paper, the authors conjectured that the exponential group $F^{\BZ[t]}$ was a universe for all limit groups over a free group or, in other words, that any finitely generated fully residually free group was a subgroup of $F^{\BZ[t]}$. This conjecture was proven in 1998 by Kharlampovich and Miasnikov, see \cite{KM2}, and later, using different methods, by Sela, and by Champetier and Guirardel.

\smallskip

In \cite{KZ}, Kochloukova and Zalesskii begun a study of pro-$p$-analogues of limit groups. Since the usual combinatorial and geometric techniques can not be applied for pro-$p$ groups, the authors approach pro-$p$ analogues of limit groups via their algebraic characterisation as subgroups of iterated sequences of extensions of centralisers. More concretely, they define the class of pro-$p$ groups $\mathcal L$ as the class that contains finitely generated pro-$p$ subgroups of sequences of extensions of centralisers performed in the category of pro-$p$ groups starting with a finitely generated free pro-$p$ group. However, as the authors point out, it is not known whether the class of pro-$p$ groups $\mathcal L$ contains \emph{only} fully residually free pro-$p$ groups and whether it contains \emph{all} finitely generated fully residually free pro-$p$ groups.

\smallskip

The main goal of this note is to present an alternative approach to the study of limit groups over free pro-$p$ groups via an analogue of Lyndon's exponential group. More precisely, we introduce a new class of groups $\FF(A,R)$, called the \emph{free pro-Hall $R$-groups} as inverse limits of Hall $R$-groups and show that for the binomial closure $S^{bin}$ of any ring $S$ (or $\BZ_p$-algebra) discriminated by $\BZ_p$, the free pro-Hall $S^{bin}$-group $\FF(A,S^{bin})$ is fully residually free pro-$p$. Furthermore, we note that any finite set of elements in $\FF(A,S^{bin})$ explicitly defines a pro-$p$ subgroup of $\FF(A,S^{bin})$ and so an irreducible coordinate group over a free pro-$p$ group. 

In the particular case when the ring $S$ is universal for the class of finitely generated fully residually $\BZ_p$ rings (i.e. $S$ is fully residually $\BZ_p$ and any finitely generated  $\BZ_p$-algebra discriminated by $\BZ_p$ is a subring of $S$)  any finitely generated pro-$p$ subgroup of the free pro-Hall $S^{bin}$-group $\FF(A,S^{bin})$ is fully residually free pro-$p$ and we conjecture that the converse also holds, that is, any irreducible coordinate group over a free pro-$p$ group is a subgroup of $\FF(A,S^{bin})$.

\section{Basics on algebraic geometry over pro-$p$ groups}

Basics of algebraic geometry over arbitrary algebraic structures were established in \cite{DMR1}.  In this section we adjust these basic notions to the case of pro-$p$ groups and refer the reader to \cite{DMR1, DMR2} and \cite{Mel, Mel2}  for further details.

Let $\Gr(p)$ be the category of pro-$p$ groups. Objects in the category $\Gr(p)$ are groups which are inverse limits of an inverse system of discrete finite $p$-groups with the spectral topology and morphisms are continuous (in the spectral topology) homomorphisms of groups. Let $\FF=\FF(X)$ be the free object of this category, that is $\FF(X)$ is a free  pro-$p$ group with (finite) basis $X$.

We fix a pro-$p$ group $G$ and introduce the notion of a $G$-group. A pro-$p$ group $H$ is called a $G$-group if it contains a designated copy of $G$, which we identify with $G$. More precisely, a pro-$p$ group $H$  is a pro-$p$ group together with an embedding $\alpha: G\to H$. A morphism or $G$-homomorphism $\varphi$ from a $G$-pro-$p$ group $H_1$ to a  $G$-pro-$p$ group $H_2$ is a homomorphism of pro-$p$ groups which is the identity on $G$.

Obviously, $G$-pro-$p$ groups form a category that we denote by $\Gr_G(p)$.  Observe that the group $G[[X]]=G*_p \FF(X)$, where $*_p$ denotes the free product in the category $\Gr(p)$, is the free object (with basis $X=\{x_1,\dots,x_n\}$) in the category $\Gr_G(p)$.

A formal equality $f=1$, where $f\in G[[X]]$  can be treated  as an equation over $G$. Similarly, any subset $S \subset G[[X]]$ can be treated as a system of equations with coefficients in $G$. Note that in contrast with the case of abstract groups the left side of an equation is not necessarily a group word over $G$ in variables $X=\{x_1,\dots, x_n\}$.

Naturally, an element $g = (g_1,\ldots , g_n ) \in G^n $  is a solution of $f \in G[[X]]$ if
$$
f(g) = f(g_1 ,\ldots ,g_n) = 1.
$$
Similarly, a point $g \in G^n$ is a solution of a system $S$ if every polynomial from $S$ vanishes at $g$. We denote the algebraic set defined by $S$, that is the set of all solutions of $S$, by $V_G(S)$.

Let $g=(g_1,\dots, g_n)\in V_G(S)$. Denote by $\varphi _g$ the $G$-homomorphism from $G[[X]]$ to $G$ induced by the map $x_i\mapsto g_i$. Note that the set of solutions is in one-to-one correspondence with the set of homomorphisms $\varphi: G[[X]]\to G$ so that $S\subset \ker\varphi$, or equivalently, with the set of homomorphisms from $\factor{G[[X]]}{\ncl(S)}$ to $G$.

The set of all solutions of an equation $f=1$ is closed in the pro-$p$ topology on $G^n$ since it is the pre-image of the identity under the map $(g_1,\dots, g_n)\mapsto f(g_1,\dots, g_n)$ from $G^n$ to $G$. On the other hand, any algebraic set $V$ is also closed in the pro-$p$ topology since it is the intersection of closed sets corresponding to solutions of the equation.

The set $G^n$ can be endowed with the Zariski topology by taking all algebraic sets as a pre-basis of closed sets. The Zariski topology is weaker than the pro-$p$ topology. Recall that a Zariski-closed set is called irreducible if it can not be written as a union of two proper closed subsets.

We set 
$$ 
\rad(V_G(S)) = \mathop \bigcap \limits_{g \in V_G(S)} \ker \varphi _g.
$$
It is immediate to check that $\rad(V_G(S))$ is a normal subgroup of $G[[X]]$ and it is closed in the pro-$p$ topology as it is the intersection of kernels of the corresponding epimorphisms.  The factor group $\Gamma(S)=\Gamma(V_G(S))$ of $G[[X]]$ by the radical $\rad(V_G(S))$ is called the coordinate group associated to $S$ and is a pro-$p$ group. It follows immediately from the definition that the coordinate group $\Gamma(S)$ contains a copy of $G$ and is residually $G$. A coordinate group is called irreducible if so is $V$.

A universal $G$-sentence over $G$ is a sentence of the form:
$$
\forall x_1,\dots, x_n \Phi(X),
$$
where $\Phi(X)$ is a disjunction of conjunctions of finitely many equations and inequations of the form $v(X)=1$ or $u(X)\neq 1$, where $u,v\in G[[X]]$. Similarly, one defines existential $G$-sentences. Two pro-$p$ $G$-groups are called universally (existentially) equivalent if they satisfy the same set of universal (existential) $G$-sentences. Note that in contrast with the case of abstract groups, we enlarge the collection of terms and, in general, a term is not necessarily a group word over $G$ in variables $X$.

Recall that if $A$ and $B$ are two pro-$p$ $G$-groups, then $A$ is said to be \emph{fully residually $B$} or \emph{discriminated by $B$} if and only if for any  finite subset  of elements $C\subset A$ there exists a $G$-homomorphism $A\to B$ which is injective on $C$.

The following result is a reformulation of \cite[Theorem A, Theorem B and Remark 6.4]{DMR2} in our setting, see also \cite{Mel} and \cite[Section 1.5]{Mel2}. Its proof is analogous to the one given in \cite{DMR2} and we omit it.

\begin{thm}[see Theorem A, Theorem B and Remark 6.4 in \cite{DMR2}] \label{thm:un}  
For a finitely generated pro-$p$ $G$-group $H$ the following conditions are equivalent:
\begin{enumerate}
\item [1)] $H$ is $G$-discriminated by $G$;
\item [2)] $H$ is the  coordinate algebra of an irreducible
algebraic set over $G$.
\end{enumerate}
If $H$ satisfies either of the above conditions, then the following equivalent statements hold
\begin{enumerate}
\item [3)] $G\equiv_{\forall, G} H$;
\item [4)] $G \equiv_{\exists, G} H$;
\item [5)] $H$ $G$-embeds into the ultrapower of $G$.
\end{enumerate}
\end{thm}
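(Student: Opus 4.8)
\emph{Proof proposal.} The plan is to handle the two blocks of the statement separately: first to establish the algebraic--geometric equivalence $1)\Leftrightarrow 2)$, and then to run the cycle $1)\Rightarrow 5)\Rightarrow 3)\Leftrightarrow 4)\Rightarrow 5)$, which shows that $3)$, $4)$, $5)$ are mutually equivalent and are all implied by the first two conditions (note that, as the statement is phrased, no implication back to $1)$ or $2)$ is required). For $2)\Rightarrow 1)$ I would write $H=\Gamma(S)=\factor{G[[X]]}{\rad(V_G(S))}$ with $V=V_G(S)$ irreducible, lift $1\neq h_1,\dots,h_k\in H$ to $f_1,\dots,f_k\in G[[X]]\setminus\rad(V)$, and observe that each $U_i=\{g\in V:\varphi_g(f_i)\neq 1\}$ is non-empty and Zariski-open in $V$; irreducibility forces $U_1\cap\dots\cap U_k\neq\varnothing$, and any $g$ in the intersection gives a $G$-homomorphism $\varphi_g\colon H\to G$ injective on $\{h_1,\dots,h_k\}$. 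For the converse $1)\Rightarrow 2)$ I would present $H=\factor{G[[X]]}{N}$ over a finite basis $X$ and take $S=N$: discrimination shows every $f\notin N$ survives some $\varphi_g$ with $g\in V_G(S)$, whence $\rad(V_G(S))=N$ and $H=\Gamma(S)$; irreducibility of $V$ is then itself forced by discrimination, since a splitting $V=V_1\cup V_2$ into proper closed subsets would (using $V_G(\rad(V_i))=V_i$) provide $f_i\in\rad(V_i)\setminus\rad(V)$, and a single $\varphi_g$ separating both $f_1,f_2$ from $1$ could lie in neither $V_1$ nor $V_2$.

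For $1)\Rightarrow 5)$ the idea is to build the ultrapower directly out of discrimination. Let $I$ be the set of finite subsets of $H\setminus\{1\}$ directed by inclusion, and for each $C\in I$ fix a $G$-homomorphism $\phi_C\colon H\to G$ injective on $C$. The cones $\{D\in I:C\subseteq D\}$ generate a filter on $I$; extending it to an ultrafilter $\mathcal U$, I would set $\Phi(h)=[(\phi_C(h))_C]\in\factor{G^I}{\mathcal U}$. Since every $\phi_C$ fixes $G$ pointwise, $\Phi$ restricts to the diagonal embedding on $G$ and is a $G$-homomorphism; and if $h\neq 1$ then $\phi_C(h)\neq 1$ for all $C$ containing $h$, a $\mathcal U$-large index set, so $\Phi(h)\neq 1$. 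Thus $\Phi$ is a $G$-embedding of $H$ into an ultrapower of $G$.

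The remaining equivalences I would read off from \L o\'s's theorem and the standard preservation directions. From $G\hookrightarrow H\hookrightarrow\factor{G^I}{\mathcal U}$, together with the fact that the ultrapower is $G$-elementarily equivalent to $G$, universal $G$-sentences descend from the ultrapower to $H$ and from $H$ down to $G$, while existential ones ascend; this yields $G\equiv_{\forall,G}H$, i.e. $5)\Rightarrow 3)$. The equivalence $3)\Leftrightarrow 4)$ is the formal duality between a universal $G$-sentence and the negation of its existential counterpart, so it needs no extra work. Finally $4)\Rightarrow 5)$ is the classical embedding criterion: $\Th_{\forall,G}(G)\subseteq\Th_{\forall,G}(H)$ already suffices for $H$ to $G$-embed into some ultrapower of $G$, via the compactness/diagram argument carried out in the $G$-language enriched by a constant for each element of $G$.

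The main obstacle, and the only place where the pro-$p$ adjustment is not purely formal, is to fix the correct first-order language $\cL_G$ \emph{before} any model theory is invoked. The admissible terms here are the elements of $G[[X]]$, which are \emph{not} ordinary group words over $G$ but elements of the free pro-$p$ product $G*_p\FF(X)$; one must set up $\cL_G$ so that each equation $f=1$ is an atomic formula, check that \L o\'s's theorem and substructure preservation hold verbatim in $\cL_G$, and verify that the $G$-embedding of $H$ into $\factor{G^I}{\mathcal U}$ produced above is compatible with the pro-$p$ structure and not merely an abstract group map. Once $\cL_G$ is correctly specified, each step above becomes formally identical to its counterpart in \cite{DMR2}, which is why the proof can be regarded as analogous.
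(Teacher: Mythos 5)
Your overall route is exactly the one the paper intends: the paper itself gives no proof of Theorem \ref{thm:un}, deferring to \cite{DMR2}, and your decomposition --- $1)\Leftrightarrow 2)$ via the radical/coordinate-group correspondence, then the cycle $1)\Rightarrow 5)\Rightarrow 3)\Leftrightarrow 4)\Rightarrow 5)$ built from a discrimination-indexed ultrafilter, \L{}o\'{s}'s theorem and a Frayne-type compactness argument --- is the standard argument of \cite{DMR2}, and you correctly isolate the only non-formal adjustment to the pro-$p$ setting (terms are elements of $G[[X]]$ rather than group words, and all homomorphisms are continuous, hence commute with these generalized terms).

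There is, however, one step that is wrong as written. In the direction ``discrimination $\Rightarrow$ irreducibility'' you split $V=V_1\cup V_2$ into proper \emph{closed} subsets and invoke $V_G(\rad(V_i))=V_i$ to produce $f_i\in\rad(V_i)\smallsetminus\rad(V)$. In this Zariski topology the algebraic sets form only a \emph{pre-basis} of closed sets, so a closed set is an arbitrary intersection of finite unions of algebraic sets and need not be algebraic; for an arbitrary subset $W\subseteq G^n$ the set $V_G(\rad(W))$ is the smallest \emph{algebraic} set containing $W$, so the identity you use holds precisely when $V_i$ is algebraic. For a non-algebraic proper closed subset one may a priori have $\rad(V_i)=\rad(V)$, and then no $f_i$ exists and your argument stalls. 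The repair is the standard reduction: pick $p_i\in V\smallsetminus V_i$; since $V_i$ is an intersection of finite unions of algebraic sets, there is a finite union $W_i$ of algebraic sets with $V_i\subseteq W_i$ and $p_i\notin W_i$; then $V=(V\cap W_1)\cup(V\cap W_2)$ exhibits $V$ as a finite union of \emph{proper algebraic} subsets $B_j$ (note $V\cap V_G(T)=V_G(S\cup T)$), and for algebraic $B_j\subsetneq V$ one does get $\rad(B_j)\supsetneq\rad(V)$; choosing $f_j\in\rad(B_j)\smallsetminus\rad(V)$ and applying discrimination yields a point of $V$ lying in no $B_j$, a contradiction. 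A second, much smaller, point: the paper defines discrimination as injectivity on finite sets, whereas your arguments in $2)\Rightarrow 1)$ and in the ultrapower construction only produce, respectively use, non-vanishing on finite sets of non-trivial elements; pass between the two formulations by adjoining the pairwise quotients $h_ih_j^{-1}$ (respectively, by replacing $C$ with $C\cup\{1\}$), as usual.
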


Hence, one way to understand irreducible coordinate groups over  a pro-$p$ group $G$ is to study the class of pro-$p$ groups discriminated by $G$. 

\begin{rem}
Note that if the group $G$ is equationally Noetherian, i.e. the Zariski topology on $G^n$ is Noetherian for all $n$, then all the statements 1)-5) of Theorem \ref{thm:un} are equivalent. One can show, see \cite[Corollary 1.1]{Mel2}, that free nilpotent and metabelian pro-$p$ groups are equationally Noetherian. However, we do not know whether or not the free pro-$p$ group is equationally Noetherian.
\end{rem}

\section{Binomial Closure}
The main goal of this section is to prove that the binomial closure of any ring discriminated by $\BZ_p$ is fully residually $\BZ_p$. We first recall the definition of a binomial ring, see \cite{Hall,bincl}.
\begin{defn}
A ring $R$ is called \emph{binomial} if $R$ is a domain of characteristic $0$ and  for any element $\lambda \in R$ and any natural number $n$, the ring $R$ contains the following element (binomial coefficient):
\[
C_\lambda^n=\frac{\lambda(\lambda-1)(\lambda-2)\cdots (\lambda-n+1)}{n!}.
\]
Note that fields of characteristic zero, the ring of polynomials over such fields, the integers and the $p$-adic integers are binomial rings.
\end{defn}

\begin{defn} \label{defn:bcl}
Let $R$ be a domain of characteristic $0$. Consider the copy of $\BZ$ in $R$ generated by the unit of $R$ and let $\BZ^{-1}R=L$ be the localisation of $\BZ$ in $R$, see \cite{Bourb} for details. 

Define $R^{bin}$ recursively as follows. Let $R= R_0<L$, suppose that $R_i< L$ is already defined and define $R_{i+1}$ as follows
$$
R_{i+1}=\langle R_i, C_n^\alpha \mid \alpha \in R_i\smallsetminus R_{i-1}\rangle< L.
$$
Set $\varinjlim R_i=R^{bin}$
\end{defn}

\begin{lem}\label{lem:exthombin}\
\begin{enumerate}
\item  The binomial closure $R^{bin}$ is a binomial ring containing $R$;
\item  The binomial closure $R^{bin}$ satisfies the following universal property.  For any binomial ring $S$ and any homomorphism $\phi$ from $R$ to $S$ there exists a unique homomorphism $\phi':R^{bin}\to S$ which makes the following diagram commutative:
$$
\xymatrix@C3em{
R \ar@{^{(}->}[d]_{id} \ar[r]^{\phi}  & S\\
R^{bin} \ar[ru]_{\phi'}
}
$$
In addition, if $\phi$ is injective, then so is $\phi'$.
\end{enumerate}
\end{lem}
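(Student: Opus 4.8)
The plan is to carry out everything inside the localisation $L=\BZ^{-1}R$, which is a domain of characteristic $0$ containing $\BQ$, and to exploit that in a characteristic-$0$ domain multiplication by $n!$ is injective, so that a binomial coefficient is the \emph{unique} solution of the identity $n!\,C_\alpha^n=\alpha(\alpha-1)\cdots(\alpha-n+1)$. Throughout I use the convention $R_{-1}=\emptyset$, so that $R_1$ is generated over $R_0$ by all the $C_\alpha^n$ with $\alpha\in R_0$.

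For part (1), each $R_i$ is a subring of $L$, hence so is $R^{bin}=\bigcup_i R_i$; in particular $R^{bin}$ is a domain of characteristic $0$ containing $R=R_0$. To see that it is closed under binomial coefficients I would prove by induction on $i$ that $C_\alpha^n\in R_{i+1}$ for every $\alpha\in R_i$ and every $n$. The base case holds by the choice of $R_1$. In the inductive step an element $\alpha\in R_i$ either lies in $R_{i-1}$, whence $C_\alpha^n\in R_i\subseteq R_{i+1}$ by the inductive hypothesis, or lies in $R_i\smallsetminus R_{i-1}$, whence $C_\alpha^n\in R_{i+1}$ by the definition of $R_{i+1}$. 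Since each $\lambda\in R^{bin}$ lies in some $R_i$, we get $C_\lambda^n\in R_{i+1}\subseteq R^{bin}$, so $R^{bin}$ is binomial.

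For part (2) the idea is to enlarge $\phi$ to the whole of $L$ and then observe that its restriction to $R^{bin}$ already lands in $S$. Let $F=\mathrm{Frac}(S)$; as $S$ is a characteristic-$0$ domain, $F$ is a field containing $\BQ$, and the composite $R\to S\hookrightarrow F$ carries every nonzero integer to an invertible element of $F$. By the universal property of localisation this composite factors uniquely through a ring homomorphism $\tilde\phi\colon L\to F$. I would then show by induction on $i$ that $\tilde\phi(R_i)\subseteq S$. Here is the only place the hypothesis on $S$ enters: for $\alpha\in R_i$ with $\tilde\phi(\alpha)\in S$, the element $\tilde\phi(C_\alpha^n)$ satisfies $n!\,\tilde\phi(C_\alpha^n)=\tilde\phi(\alpha)(\tilde\phi(\alpha)-1)\cdots(\tilde\phi(\alpha)-n+1)$, as does $C_{\tilde\phi(\alpha)}^n\in S$ because $S$ is binomial; cancelling $n!$ in the domain $F$ gives $\tilde\phi(C_\alpha^n)=C_{\tilde\phi(\alpha)}^n\in S$. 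As $R_{i+1}$ is generated over $R_i$ by such coefficients, $\tilde\phi(R_{i+1})\subseteq S$. Then $\phi'=\tilde\phi|_{R^{bin}}$ is an $R$-homomorphism $R^{bin}\to S$ extending $\phi$.

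Uniqueness follows from the same cancellation: any $R$-homomorphism $\psi\colon R^{bin}\to S$ extending $\phi$ must send $C_\alpha^n$ to $C_{\psi(\alpha)}^n$ (since $S$ is a domain), so a second induction on $i$ forces $\psi=\phi'$ on every $R_i$, hence on $R^{bin}$. For the final claim, if $\phi$ is injective then $R$ embeds into the domain $S\subseteq F$, so the localisation map $\tilde\phi\colon L\to F$ is injective (an element $r/m$ maps to $0$ only if $\phi(r)=0$, i.e. $r=0$); restricting to $R^{bin}$ and using $S\hookrightarrow F$ shows $\phi'$ is injective. The one genuinely delicate point is the well-definedness of the extension — guaranteeing that sending binomial coefficients to binomial coefficients respects all the hidden relations holding among them inside $L$ — and the passage to $F$ together with the universal property of localisation is exactly the device that sidesteps it, collapsing the whole matter to the single inductive verification that the image stays inside $S$.
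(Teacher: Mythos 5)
Your proof is correct and follows essentially the same route as the paper's: extend $\phi$ to the localisation $L$ by mapping into the field of fractions of $S$ via the universal property of localisation, then induct on the filtration $R_i$ to show that the restriction to $R^{bin}$ lands in $S$. In fact you are somewhat more thorough than the paper, which leaves implicit the $n!$-cancellation argument showing $\bar\phi(C^n_\alpha)=C^n_{\bar\phi(\alpha)}$, the uniqueness of $\phi'$, and the final injectivity claim.
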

\begin{proof}
To see the first statement it suffices to note that for any $x\in R^{bin}$ there exists $i\in \BN$ so that $x\in R_i$. Hence, by definition, the binomial coefficients $C_x^n$ are elements of $R_{i+1}$ and so $R^{bin}$ is a binomial ring.

Without loss of generality, we assume that $\phi$ is non-trivial. Let $\bar S$ be the field of fractions of $S$ and consider the homomorphism $\phi'' = id \circ \phi: R \to \bar S$. Since $\phi''$ is a non-trivial homomorphism and so injective on $\BZ$, it follows that every element of $\BZ$ is sent to a non-trivial element and hence a unit of $\bar S$. Therefore, by the universal property of the localisation, we have that the homomorphism $\phi$ extends to a unique homomorphism $\bar \phi: L \to \bar S$. We have the following commutative diagram:
$$
\xymatrix@C5em{
R \ar@{^{(}->}[d]_{id} \ar[r]^{\phi}  & S \ar[d]^{id}\\
R^{bin} \ar@{^{(}->}[d]_{id} & S\ar@{^{(}->}[d]^{id} \\
L \ar[r]_{\bar\phi} &\bar S.
}
$$
We next show that the restriction $\phi'=\bar\phi |_{R^{bin}}$ from $R^{bin}$ to $\bar S$ has image in $S$, that is $\bar\phi |_{R^{bin}}(R^{bin})<S<\bar S$. 

For any element $x\in R^{bin}$ there exists $i\in \BN$ so that $x\in R_i$, $x\notin R_{i-1}$. By definition of $R_i$, it follows that for any such $x$ there exist a finite tuple of elements $\{C_{n_1}^{\beta_1}, \dots, C_{n_k}^{\beta_k}\}$, where $\beta_i\in R_{i-1}$  so that $x$ can be written as a polynomial expression in this set with coefficients in $R$.

Clearly,  $\bar\phi |_{R^{bin}}(R_0)< S$ since by definition $\bar\phi |_{R^{bin}}(R_0)= \phi(R)$. Assume by induction that for all $y\in R_j$, $j<i$, we have  that $\bar\phi |_{R^{bin}}(y)\in S$.  Since by induction $\bar\phi |_{R^{bin}}(\beta_m) \in S$ and $S$ is a binomial ring, it follows that $\bar\phi |_{R^{bin}}(C_{n_m}^{\beta_m})\in S$ for all $m=1,\dots, k$ and so $\bar\phi |_{R^{bin}}(x)\in S$, for all $x\in R_i$. Hence, we have obtained the following commutative diagram:
$$
\xymatrix@C5em{
R \ar@{^{(}->}[d]_{id} \ar[r]^{\phi}  & S \ar[d]^{id}\\
R^{bin} \ar@{^{(}->}[d]_{id} \ar[r]^{\bar\phi|_{R^{bin}}} & S\ar@{^{(}->}[d]^{id} \\
L \ar[r]_{\bar\phi} &\bar S.
}
$$
\end{proof}

Hence, for any domain $R$ of characteristic zero, the binomial closure of $R$ is a ring which is binomial, contains $R$ and is minimal with these properties, i.e. if $S$ is any binomial ring and $S<R$, then $R^{bin}<S$.

\begin{lem} \label{lem:discbin}
Let $R$ and $S$ be domains of characteristic zero, let $R^{bin}$ be the binomial closure of $R$ and suppose that $S$ is binomial. Suppose that $R$ is discriminated by $S$ by a family of homomorphisms $\{\phi_n\}$, then, in the notation of {\rm Lemma \ref{lem:exthombin}}, $R^{bin}$ is discriminated by $S$ by the family $\{\phi_n'\}$.
\end{lem}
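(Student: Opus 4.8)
The plan is to reduce the discrimination of $R^{bin}$ to the discrimination of $R$ that we are given, exploiting the fact that, by the very construction of Definition \ref{defn:bcl}, the whole closure $R^{bin}$ lies inside the localisation $L=\BZ^{-1}R$. The key structural observation is that every element of $R^{bin}$ has an \emph{integer denominator}: since each $R_i<L$ and $L$ consists precisely of the fractions $r/m$ with $r\in R$ and $m\in\BZ\smallsetminus\{0\}$, each $x\in R^{bin}$ can be written as $x=r_x/m_x$ with $r_x\in R$ and $m_x\in\BZ\smallsetminus\{0\}$. Informally, passing to the binomial closure only divides elements of $R$ by (products of) factorials, so no genuinely new numerators outside $R$ ever appear, and this is what will let the given family $\{\phi_n\}$ do all the work.

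First I would fix an arbitrary finite subset $C'=\{x_1,\dots,x_k\}\subset R^{bin}$ on which I must make some $\phi_n'$ injective. By Lemma \ref{lem:exthombin} each $\phi_n$ extends uniquely to $\phi_n'\colon R^{bin}\to S$; this $\phi_n'$ restricts to $\phi_n$ on $R$ and, being a ring homomorphism, fixes the copy of $\BZ$. For each pair $i\neq j$ the difference $x_i-x_j$ again lies in $R^{bin}<L$, so I write $x_i-x_j=r_{ij}/m_{ij}$ with $r_{ij}\in R$ and $m_{ij}\in\BZ\smallsetminus\{0\}$, noting that $r_{ij}\neq 0$ because $x_i\neq x_j$. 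Clearing denominators \emph{inside} $R^{bin}$ gives the identity $m_{ij}(x_i-x_j)=r_{ij}$, and applying the ring homomorphism $\phi_n'$ yields $m_{ij}\,\phi_n'(x_i-x_j)=\phi_n(r_{ij})$ in $S$.

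Since $S$ is a binomial ring, hence a domain of characteristic $0$, the integer $m_{ij}$ is a nonzero element of $S$, and the displayed identity shows that $\phi_n'(x_i-x_j)=0$ if and only if $\phi_n(r_{ij})=0$. Thus $\phi_n'$ is injective on $C'$ exactly when $\phi_n(r_{ij})\neq 0$ for all $i\neq j$, and it suffices to produce a single index $n$ with this property. To this end I would collect the finite set of numerators $C=\{r_{ij}\mid i\neq j\}\cup\{0\}\subset R$; since $R$ is discriminated by $S$ via $\{\phi_n\}$, some $\phi_n$ is injective on $C$, and as every $r_{ij}\neq 0$ this yields $\phi_n(r_{ij})\neq\phi_n(0)=0$ for all $i\neq j$. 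The corresponding $\phi_n'$ is then injective on $C'$, which is precisely what discrimination of $R^{bin}$ by $\{\phi_n'\}$ demands.

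The argument is essentially a bookkeeping reduction, so I do not expect a serious obstacle. The only point requiring care is the passage from $m_{ij}(x_i-x_j)=r_{ij}$ to $m_{ij}\,\phi_n'(x_i-x_j)=\phi_n(r_{ij})$, where one must use both that $\phi_n'$ agrees with $\phi_n$ on $R$ and that it is a genuine ring homomorphism fixing the integers; both facts are supplied by Lemma \ref{lem:exthombin}. The conceptual crux, which dissolves the apparent difficulty, is simply the observation that elements of the binomial closure carry integer denominators, so that separating them over $S$ is governed entirely by their numerators in $R$.
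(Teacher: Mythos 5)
Your proof is correct, but it is organized genuinely differently from the paper's. The paper argues by induction along the filtration $R=R_0<R_1<\cdots$ defining $R^{bin}$: given a finite set $M\subset R_i$, it multiplies all elements of $M$ by a sufficiently large integer $K$ so that $KM\subset R_{i-1}$, applies the inductive hypothesis to get some $\phi_n'$ injective on $KM$, and then transfers injectivity back to $M$ using that $R^{bin}$ and $S$ are domains of characteristic zero. Your argument compresses this to a single step: since by Definition \ref{defn:bcl} the whole tower sits inside $L=\BZ^{-1}R$, every pairwise difference $x_i-x_j$ has an integer denominator, $m_{ij}(x_i-x_j)=r_{ij}\in R$, so discrimination of $R^{bin}$ reduces at once to discrimination of $R$ applied to the finite set of numerators $\{r_{ij}\}\cup\{0\}$, with characteristic zero in the domain $S$ ensuring that the integer $m_{ij}$ cannot kill anything. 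The underlying mechanism (clearing integer denominators, then invoking characteristic zero) is the same in both proofs, but your route avoids the induction entirely and also sidesteps a point the paper leaves implicit, namely that a single integer $K$ pushes a finite subset of $R_i$ into $R_{i-1}$ (which rests on the fact that $n!\,C^{\beta}_{n}$ is a polynomial in $\beta$ with integer coefficients). What the paper's inductive formulation buys is that it works entirely inside the constructive tower $\{R_i\}$ without appealing to the ambient localisation; what yours buys is brevity and the clean equivalence that $\phi_n'$ is injective on $\{x_1,\dots,x_k\}$ if and only if $\phi_n$ annihilates no numerator $r_{ij}$.
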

\begin{proof}
Let $M\subset R^{bin}$ be a finite set of elements. By definition, $\varinjlim R_i=R^{bin}$ and we use induction on $i$. 

Firstly, note that for any finite $M$  there exists $R_i$ so that $M\subset R_i$. If $i=0$, then since $R=R_0$ is discriminated by $S$ by the family of homomorphisms $\{\phi_n=\phi_n'|_{R_0}\}$. This establishes the base of induction.  

Suppose by induction that for any finite set $M$ so that $M\subset R_j$, $j<i$ there is a homomorphism in the family $\{\phi_n'\}$  which is injective on $M$ and we prove the statement for $M\subset R_i$. 

Any element $x\in R_i$ can be written as a polynomial expression in $\{C_{n_1}^{\beta_1}, \dots, C_{n_k}^{\beta_k}\}$, where $\beta_i\in R_{i-1}$ and coefficients in $R$. Let $M'$ be obtained from $M$ by multiplying all elements of $M$ by a large enough integer $K$ so that $M'\subset R_{i-1}$. Note that since the characteristic of $R^{bin}$ is zero  we have that $Ks=Kt$ if and only if $s=t$, where $s, t \in R^{bin}$ are arbitrary. Hence,  all elements of $M'$ are pair-wise distinct and non-zero.

By induction, there exists a homomorphism $\phi'$ in the family $\{\phi_n'\}$ which is injective on $M'$ and maps its elements to pairwise distinct elements of $S$. We show that $\phi'$ is also injective on $M$. Indeed,  since $\phi'$ is a homomorphism on $R^{bin}$, for any element $M'\ni m'=Km$, where $m\in M$, we have  $\phi'(m')=\phi'(Km)=K\phi'(m)$. Since the characteristic of $S$ is zero, so $\phi'$ is injective on $M$.
\end{proof}

\section{Hall and Free Pro-Hall $R$-groups}
In this section we introduce free pro-Hall $R$-groups and show that if the binomial ring $R$ is fully residually $\BZ_p$, then the free pro-Hall $R$-group is fully residually free pro-$p$.

\begin{defn}
Let $R$ be a binomial ring. A Hall $R$-group $G$ is a nilpotent group together with a function $G\times R\to G$, $(g,r)\mapsto g^r$, $g\in G$, $r\in R$ satisfying the following universal axioms:
\begin{itemize}
\item
If $g\in G$ and $\alpha\in R$, let us denote by $g^\alpha$ the element $f_\alpha (g)$.
Then
\[
g^0=1, \quad g^{\alpha+\beta}=g^\alpha g^\beta, \quad (g^\alpha)^\beta=g^{\alpha\beta}.
\]
\item  For every $x,y\in G$ and for every $\lambda \in R$ we have $y^{-1}x^\lambda y=(y^{-1}xy)^\lambda$.
\item Let $c$ be the nilpotency class of $G$. Then for every $x_1,\dots, x_n\in G$ and for every $\lambda\in R$ the following axiom holds:
$x_1^\lambda\dots x_n^\lambda= (x_1\dots x_n)^\lambda \tau_2(X)^{C_\lambda^2}\cdots \tau_c(X)^{C_\lambda^m}$, where $X=\{x_1,\dots, x_n\}$ and $\tau_i(X)$ is the $i$-th Petresco word.
\end{itemize}
\end{defn}

Recall that for any $i$, the $i$-th Petresco word is defined in the free group $F$ on
$\{a_1,\dots, a_n\}$ recursively by the following formula:
\[
a_1^i\cdots a_n^i=\tau_1(A)^{C_i^1}\tau_2(A)^{C_i^2} \cdots \tau_i(A)^{C_i^i}.
\]

The axioms of Hall $R$-groups are identities, hence they define  a variety of nilpotent Hall $R$-groups of class $c$.  From general facts in universal algebra, see \cite{Mal}, it follows that the notions of $R$-subgroup, $R$-homomorphism, free Hall $R$-group, etc. are naturally defined. 

Let $F(A,R,c)$ be the free Hall $R$-group on $A=\{a_1,\dots, a_n\}$ of class $c$.  For all $c\ge 1$, there is a natural homomorphism $\psi_{c}:F(A,R,c+1)\to F(A,R,c)$. Given the inverse system $\{F(A,R,c), \psi_c\}$, we consider the inverse limit $\varprojlim F(A,R,c)$ which we denote by $\FF(A,R)$ and call the \emph{free pro-Hall $R$-group}.

It is well-known that the Hall basic commutators of weight less than $c+1$ form a basis of the free Hall $R$-group, i.e. every $g\in  F(A,R,c)$ can be uniquely written as follows:
$$
g=a_1^{\alpha_{11}}\cdots a_n^{\alpha_{1n}}\cdot \prod\limits_{i<j, i,j=1}^n [a_i,a_j]^{\alpha_{2,ij}}\cdots \prod\limits_{j=1}^{k_{l}} b_{j}^{\alpha_{l,j}}\cdots \prod\limits_{j=1}^{k_{c}} b_{j}^{\alpha_{c,j}},
$$
where $\alpha_{m,j}\in R$ and $b_j^{\alpha_{m,j}}$ are the Hall basic commutators of weight $m$.

It follows that the set of all Hall basic commutators form a basis of the free pro-Hall $R$-group $\FF(A,R)$, that is every element $g\in \FF(A,R)$ can be written as an infinite product:
\begin{equation}\label{eq:nf}
g=a_1^{\alpha_{11}}\cdots a_n^{\alpha_{1n}}\cdot \prod\limits_{i<j, i,j=1}^n [a_i,a_j]^{\alpha_{2,ij}}\cdot \prod\limits_{j=1}^{k_3} b_{j}^{\alpha_{3,j}}\cdots,
\end{equation}
where $\alpha_{m,j}\in R$ and $b_j^{\alpha_{m,j}}$ are the Hall basic commutators of weight $m$.

\begin{rem} \label{rem:hallprop}
Observe that if $R$ is the ring of $p$-adic integers $\BZ_p$, then any finitely generated torsion-free Hall $R$-group $G$ is a pro-$p$ group and the free Hall $R$-group $F(A,R,c)$ is the free nilpotent pro-$p$ group on $A$. Moreover, the pro-Hall $\BZ_p$-group $\FF(A,\BZ_p)$ is simply the free pro-$p$ group with base $A$. 

The above facts are well-known, but we were unable to find a reference. To prove these results, it suffices to observe that if $G$ is a finitely generated Hall $\BZ_p$-group, then the quotient map $\BZ_p\to \factor{\BZ}{p^n\BZ}$ gives rise to a quotient map from $G$ to a $p$-group $G_n$. Moreover, viewing  $\BZ_p$  as the inverse limit, we obtain the inverse limit of groups $\varprojlim G_n$. One can then check that $G\simeq  \varprojlim G_n$.
\end{rem}

Given an element $g\in \FF(A,R)$ in the form (\ref{eq:nf}) we define the $l$-truncation of $g$ to be 
\begin{equation}\label{eq:trunc}
g=a_1^{\alpha_{11}}\cdots a_n^{\alpha_{1n}}\cdots \prod\limits_{j=1}^{k_l} b_{j}^{\alpha_{l,j}}.
\end{equation}

We now show that if a $\BZ_p$-domain $R$ is $\BZ_p$-discriminated by $\BZ_p$, then the abstract group $\FF(A, R^{bin})$ is discriminated by the free pro-$p$ group $\FF$ with basis $A$.

\begin{thm} \label{thm:disc}
Let $R$ be an abstract ring {\rm(}or an abstract $\BZ_p$-ring{\rm)} and suppose that $R$ is fully residually $\BZ_p$.  Then the abstract group $\FF(A,R^{bin})$ is fully residually $\FF$.
\end{thm}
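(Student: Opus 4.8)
The plan is to reduce the separation of a finite subset of $\FF(A,R^{bin})$ to the separation of a finite subset of the ring $R^{bin}$, where Lemma \ref{lem:discbin} applies. Fix a finite set $M\subset \FF(A,R^{bin})$. Since $\FF(A,R^{bin})=\varprojlim F(A,R^{bin},c)$ and distinct elements of an inverse limit already differ at some finite stage, for each pair of distinct elements of $M$ there is a class at which their projections differ; taking the maximum over the finitely many pairs yields a single $c$ for which the canonical projection $\pi_c\colon \FF(A,R^{bin})\to F(A,R^{bin},c)$ is injective on $M$. This reduces the problem to separating the finite set $\pi_c(M)$ inside the free Hall group $F(A,R^{bin},c)$ of finite class $c$.

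Next I would set up the change of rings. Any ring homomorphism $\phi'\colon R^{bin}\to \BZ_p$ turns $F(A,\BZ_p,c)$, which is the free nilpotent pro-$p$ group on $A$ by Remark \ref{rem:hallprop}, into a Hall $R^{bin}$-group by restriction of scalars, $g^r:=g^{\phi'(r)}$; the Hall axioms are preserved because $\phi'$ is a homomorphism of binomial rings and so respects binomial coefficients. By the universal property of the free Hall $R^{bin}$-group, the identity on $A$ extends to a unique $R^{bin}$-homomorphism $\Phi_c\colon F(A,R^{bin},c)\to F(A,\BZ_p,c)$. As $\Phi_c$ fixes the generators it fixes every Hall basic commutator, and being an $R^{bin}$-homomorphism it acts on the normal form (\ref{eq:nf}) by applying $\phi'$ to each exponent, sending $\prod b_j^{\alpha_{m,j}}$ to $\prod b_j^{\phi'(\alpha_{m,j})}$. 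Since $\phi'$ is a fixed ring map, the $\Phi_c$ commute with the transition maps $\psi_c$ and assemble into a group homomorphism $\Phi\colon \FF(A,R^{bin})\to \FF(A,\BZ_p)=\FF$.

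The core step is to choose $\phi'$ so that $\Phi_c$ is injective on $\pi_c(M)$. By uniqueness of the normal form, two elements of $F(A,R^{bin},c)$ are distinct precisely when some exponent $\alpha_{m,j}$ differs, and $\Phi_c$ collapses a pair exactly when $\phi'$ annihilates all such differences. Let $D\subset R^{bin}$ be the finite set of all nonzero differences $\alpha_{m,j}(u)-\alpha_{m,j}(v)$, over all pairs $u\neq v$ in $\pi_c(M)$ and all positions $(m,j)$. Since $R$ is fully residually $\BZ_p$ and $\BZ_p$ is binomial, Lemma \ref{lem:discbin} provides a homomorphism $\phi'$ in the discriminating family that is injective on the finite set $D\cup\{0\}$; in particular $\phi'(\delta)\neq 0$ for every $\delta\in D$. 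For this choice every differing exponent is sent to a genuine difference, so $\Phi_c$ separates every pair of $\pi_c(M)$. Writing $q_c\colon \FF\to F(A,\BZ_p,c)$ for the projection, the identity $q_c\circ\Phi=\Phi_c\circ\pi_c$ is then injective on $M$, whence $\Phi\colon \FF(A,R^{bin})\to\FF$ is injective on $M$, as required.

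I expect the main obstacle to be the verification that $\Phi_c$ acts diagonally on the normal-form exponents, i.e. that the induced $R^{bin}$-homomorphism genuinely sends $\prod b_j^{\alpha_{m,j}}$ to $\prod b_j^{\phi'(\alpha_{m,j})}$. This rests on the basis property of the Hall basic commutators together with the fact that $\Phi_c$ is an $R^{bin}$-homomorphism fixing $A$, and it is precisely what lets one trade the group-theoretic separation problem for the ring-theoretic one solved by Lemma \ref{lem:discbin}. The reduction to finite class, the assembly of the $\Phi_c$ into $\Phi$, and the final composition are routine by comparison.
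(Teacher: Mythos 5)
Your proposal is correct and follows essentially the same route as the paper's proof: reduce to a finite nilpotency class at which the elements of $M$ are already separated, apply Lemma \ref{lem:discbin} to a finite set of ring elements (the paper uses the exponents themselves, you use their nonzero differences), and conclude via the homomorphism that acts on normal forms by applying $\phi'$ to each exponent. Your justification of that induced map via restriction of scalars and the universal property of the free Hall group is a refinement of a step the paper merely asserts, but it is not a different argument.
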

Note that if $R$ is a finitely generated ring from the universal class of $\BZ_p$, then since $\BZ_p$ is Noetherian, $R$ is fully residually $\BZ_p$, see \cite{DMR1}. 
\begin{proof}
By Lemma \ref{lem:exthombin}, every homomorphism $\phi:R\to \BZ_p$ induces a homomorphism $\phi':R^{bin}\to \BZ_p$. In turn, $\phi'$ induces a homomorphism $\tilde \phi:\FF(A,R^{bin}) \to \FF$ defined on elements of $\FF(A,R^{bin})$ written in the form (\ref{eq:nf}) as follows:
$$
\tilde \phi: \prod\limits_l\prod \limits_j b_{j}^{\alpha_{l,j}} \mapsto \prod\limits_l\prod \limits_j b_{j}^{\phi'(\alpha_{l,j})}.
$$

By Lemma \ref{lem:discbin}, $R^{bin}$ is discriminated by $\BZ_p$. For a finite set $M$ of elements of $\FF(A,R^{bin})$, there exists $K\in \BN$ so that the  $K$-truncations of all elements of $M$ are pair-wise distinct, see Equation (\ref{eq:trunc}). Let $N$ be the following finite set of elements of $R^{bin}$, $N=\{\alpha_{l,j}\mid l\le K\}$ and let $\psi$ be a homomorphism from $R^{bin}$ to $\BZ_p$ which is injective on $N$. 

From the above observation it follows that $\psi$ induces a homomorphism $\tilde \psi:\FF(A, R^{bin}) \to \FF$ which is injective on $M$.
\end{proof}

\begin{rem}
Note that even if the ring $R$ is universally equivalent to  $\BZ_p$ (in the usual first-order language of rings), the free pro-Hall $R$-group need not be a pro-$p$ group. Indeed, let $S$ be the Henselisation of $\BZ$ and let $S^{bin}$ be its binomial closure. Then, $S^{bin}$ is universally equivalent to $\BZ_p$ and so the free pro-Hall $S^{bin}$-group $\FF( A, S^{bin})$ is discriminated by $\FF$, but it is not a pro-$p$ group since all abelian subgroups of $\FF(A,S^{bin})$ are infinite but countable.
\end{rem}

\section{Finitely generated pro-$p$ groups in $\FF( A, R)$}

Let $R$ be a binomial $\BZ_p$-ring discriminated by  $\BZ_p$ considered as structure of the language of rings enriched by constants from $\BZ_p$. In this case $R=R^{bin}$ and $R$ has a natural structure of a $\BZ_p$-algebra. In this section we show that for any finite tuple of elements $\{h_1,\dots, h_m\}$ of the group $\FF(A)=\FF(A,R)$, there exists a finitely generated pro-$p$ group $H< \FF(A,R)$ so that $h_1,\dots, h_m\in H$.

Let $X=\{x_1,\dots, x_m\}$ and let $A=\{a_1,\dots, a_n\}$ be finite alphabets. Let $\FF=\FF(X)=\FF(X,\BZ_p)$ be the free pro-$p$ group and let $\FF(A)=\FF(A, R)$ be the free pro-Hall $R$-group.

Denote by  $\gamma_{c+1}(\FF(A))$ and  $\gamma_{c+1}(\FF(X))$ the $c+1$-th members of the lower central series of $\FF(A)$ and $\FF(X)$ correspondingly. Let $\FF(A)_c=\factor{\FF(A)}{\gamma_{c+1}(\FF(A))}$ and $\FF(X)_c=\factor{\FF(X)}{\gamma_{c+1}(\FF(X))}$.  

We fix an $m$-tuple of elements $(h_1(A), \dots, h_m(A))$ from the group $\FF(A)$ and let $h_i^c$ be the image of $h_i$ in $\FF(A)_c$ under the canonical projections. If $h_i$ is written in the form (\ref{eq:nf}):
$$
h_i= \prod\limits_l\prod \limits_j b_{j}^{\alpha_{l,j,i}}= \prod\limits_l h_{i,l},
$$
then $h_i^c$ is the $c$-truncation of $h_i$.

We define a map $\theta_c: \FF(X)_c\to \FF(A)_c$. Since for any $c\ge 1$ the group $\FF(X)_c$ is $\BZ_p$-generated by $X=\{x_1^c,\dots, x_m^c\}$, it suffices to define $\theta_c$ on the $\BZ_p$-generators of $\FF(X)_c$. Set $\theta_c: x_i^c\mapsto h_i^c$.

Let now $\langle h_1^c, \dots, h_m^c\rangle_R$ be the Hall $R$- subgroup of $\FF(A)_c$ generated by the indicated set. Since $R$ is a $\BZ_p$-algebra, $\langle h_1^c, \dots, h_m^c\rangle_R$  contains a Hall $\BZ_p$-subgroup $H_c=\langle h_1^c, \dots, h_m^c\rangle_{\BZ_p}$  in a natural way. Since $H_c$ is a torsion-free Hall $\BZ_p$-group, by Remark \ref{rem:hallprop}, we have that it is a pro-$p$ group.
 
Since $\FF(X)_c$ is a free Hall $\BZ_p$-group, so $\theta_c:\FF(X)_c\to H_c\hookrightarrow \FF(A)_c$ is a well-defined homomorphism, it can be explicitly described by the following formula
$$
\theta_c:  w^c(X)\mapsto w^c(h_1^c,\dots, h_m^c),
$$
where $w^c\in \FF(X)$. 

Since $h_i^c$ is simply the $c$-truncation of $h_i$, it is clear that there is a natural homomorphism $\psi_c^{c+1}:H_{c+1}\to H_c$ so that $\psi_c^{c+1}(\theta_{c+1}(w^{c+1}(X)))=\theta_c(w^c(X))$.

Thus, we obtain the inverse spectrum $\langle H_c;  \psi_c^{c+1}(H_{c+1})=H_c\rangle$ of pro-$p$ groups, where $c\in \BN$.  Let $H$ be the inverse limit of this spectrum. Taking the limit of the maps $\theta_c$, we obtain a well-defined homomorphism $\theta: \FF(X)\to \FF(A)$ described by the formula:
$$
\theta: w(x_1,\dots, x_m)\mapsto w(h_1,\dots, h_m).
$$
We call the homomorphism $\theta$ the \emph{substitution map}. The group $\theta (\FF(X))<\FF(A, R)$ is isomorphic to $H$ and is a pro-$p$ group.

\begin{defn}
We call the pro-$p$ group $H$ constructed above \emph{the pro-$p$ subgroup of $\FF(A, R)$ generated by the elements $\{h_1,\dots, h_m\}$} and denote it by $H=\langle h_1,\dots, h_m\rangle_{\FF}$.
\end{defn}

We summarise the discussion above in the following 
\begin{prop}
The substitution homomorphism $\theta:\FF(X)\to \im(\theta)\simeq H <\FF(A, R)$ is a continuous map.
\end{prop}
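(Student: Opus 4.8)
The plan is to exploit that $\theta$ was built as the inverse limit $\theta=\varprojlim_c\theta_c$ of the level-$c$ maps $\theta_c\colon \FF(X)_c\to H_c$, and to reduce the continuity of $\theta$ to the continuity of each $\theta_c$. Both $\FF(X)=\varprojlim_c\FF(X)_c$ and $H=\varprojlim_c H_c$ carry their inverse-limit (pro-$p$) topologies, and by construction the maps $\theta_c$ are compatible with the connecting morphisms, i.e.\ $\psi_c^{c+1}(\theta_{c+1}(w^{c+1}))=\theta_c(w^c)$. Writing $p_c\colon\FF(X)\to\FF(X)_c$ and $\pi_c\colon H\to H_c$ for the canonical projections, we have $\pi_c\circ\theta=\theta_c\circ p_c$ for every $c$. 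Since $H$ carries the initial topology with respect to the family $\{\pi_c\}$, the map $\theta$ is continuous precisely when each composite $\theta_c\circ p_c$ is, which in turn follows once each $\theta_c$ is continuous. Thus the whole problem localises at a fixed level $c$.

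First I would record that the source and target of $\theta_c$ are genuine pro-$p$ groups. By Remark \ref{rem:hallprop}, the free Hall $\BZ_p$-group $\FF(X)_c=F(X,\BZ_p,c)$ is exactly the free nilpotent pro-$p$ group of class $c$ on $X$, and $H_c=\langle h_1^c,\dots,h_m^c\rangle_{\BZ_p}$, being a finitely generated torsion-free Hall $\BZ_p$-group, is likewise a pro-$p$ group; moreover $H_c$ has nilpotency class at most $c$ as a subgroup of $\FF(A)_c$. Both groups therefore come equipped with a canonical pro-$p$ topology.

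The heart of the argument is to identify the abstractly defined Hall $\BZ_p$-homomorphism $\theta_c\colon x_i^c\mapsto h_i^c$ with a continuous one. The plan is to invoke the universal property of the free nilpotent pro-$p$ group $\FF(X)_c$: since $H_c$ is a nilpotent pro-$p$ group of class at most $c$ and $h_1^c,\dots,h_m^c\in H_c$, there is a \emph{unique continuous} homomorphism $\FF(X)_c\to H_c$ sending $x_i^c\mapsto h_i^c$. A continuous homomorphism of pro-$p$ groups automatically commutes with the $\BZ_p$-power operation $g\mapsto g^\lambda$, since $g^\lambda=\lim_n g^{a_n}$ for any sequence of integers $a_n\to\lambda$ in $\BZ_p$; hence this continuous map is in particular a Hall $\BZ_p$-homomorphism. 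By freeness of $\FF(X)_c$ in the variety of Hall $\BZ_p$-groups, the Hall $\BZ_p$-homomorphism carrying $x_i^c$ to $h_i^c$ is unique, so it must coincide with $\theta_c$. Therefore $\theta_c$ is continuous.

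The main obstacle I anticipate is exactly this reconciliation of two distinct universal properties of $\FF(X)_c$ — its variety-theoretic freeness as a Hall $\BZ_p$-group against its topological freeness as a free nilpotent pro-$p$ group — together with the verification that a continuous pro-$p$ homomorphism genuinely respects $\BZ_p$-exponentiation. Once $\theta_c$ is continuous for every $c$, the conclusion is routine: $\theta=\varprojlim_c\theta_c$ is continuous as a map $\FF(X)\to H$, and composing with the inclusion $H\hookrightarrow\FF(A,R)$ yields the continuity of $\theta\colon\FF(X)\to\im(\theta)\simeq H<\FF(A,R)$ as stated.
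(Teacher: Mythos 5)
Your proposal is correct and follows essentially the same route as the paper: the Proposition is stated as a summary of the preceding construction, i.e.\ $\theta$ is by design the inverse limit of the compatible maps $\theta_c\colon\FF(X)_c\to H_c$ between pro-$p$ groups, which is exactly your reduction via $\pi_c\circ\theta=\theta_c\circ p_c$. Your reconciliation of the two universal properties of $\FF(X)_c$ (free Hall $\BZ_p$-group versus free nilpotent pro-$p$ group, via Remark \ref{rem:hallprop}) supplies a detail the paper leaves implicit; alternatively, the level-wise continuity also follows at once from the fact the paper itself invokes in the next theorem, namely that the topology of a finitely generated pro-$p$ group such as $\FF(X)_c$ is determined by its group structure (\cite[Theorem 4.3.5]{Wil}), so every abstract homomorphism to a pro-$p$ group is continuous.
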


\begin{thm}
Let $H$ be a finitely generated pro-$p$ subgroup of $\FF(A, R)$, $H=\langle h_1,\dots, h_m\rangle_{\FF}$, then $H$ is fully residually $\FF$ and hence is a coordinate group of an irreducible algebraic set over $\FF$.
\end{thm}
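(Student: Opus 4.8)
The plan is to transfer the full residual freeness from the ambient group $\FF(A,R)$ to the finitely generated pro-$p$ subgroup $H$, and then to read off the coordinate-group conclusion from the unification result, Theorem \ref{thm:un}. Since $R$ is a binomial $\BZ_p$-ring discriminated by $\BZ_p$ we have $R=R^{bin}$, so Theorem \ref{thm:disc} applies and tells us that the \emph{abstract} group $\FF(A,R)$ is fully residually $\FF$. By the construction preceding the statement, $H=\im\theta$ is a finitely generated pro-$p$ group, topologically generated by $h_1,\dots,h_m$, sitting inside $\FF(A,R)$. Thus it suffices to establish two things: that $H$ is discriminated by $\FF$ \emph{within} the category $\Gr(p)$, and that this places $H$ in the scope of Theorem \ref{thm:un}.

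First I would check that discrimination is inherited by the subgroup $H$. Given a finite subset $C\subset H$, regard it as a finite subset of $\FF(A,R)$; Theorem \ref{thm:disc} supplies a homomorphism $\Phi\colon\FF(A,R)\to\FF$ injective on $C$, and its restriction $\Phi|_H\colon H\to\FF$ is then injective on $C$. Hence $H$ is fully residually $\FF$ as an abstract group, and it only remains to ensure that the separating homomorphisms are continuous, so that the discrimination takes place in $\Gr(p)$.

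This continuity is the step I expect to require the most care. One route is structural: the maps $\Phi=\tilde\phi$ produced in Theorem \ref{thm:disc} send $\prod b_j^{\alpha_{l,j}}\mapsto\prod b_j^{\phi'(\alpha_{l,j})}$ and hence respect the lower central (truncation) filtration, so that their restrictions to the pro-$p$ group $H$ are continuous. A cleaner and more robust route is to exploit that $H$ is \emph{finitely generated} as a pro-$p$ group: by Serre's theorem every finite-index subgroup of a finitely generated pro-$p$ group is open, whence every abstract homomorphism from $H$ to a profinite group, in particular each $\Phi|_H\colon H\to\FF$, is automatically continuous. Either way, $H$ is fully residually $\FF$ in $\Gr(p)$.

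Finally, $H$ is a finitely generated pro-$p$ group discriminated by $\FF$, which is precisely condition $1)$ of Theorem \ref{thm:un} (with $G=\FF$); the implication $1)\Rightarrow 2)$ then identifies $H$ with the coordinate group of an irreducible algebraic set over $\FF$. The one formal point I would pin down before quoting Theorem \ref{thm:un} is that the discrimination and coordinate-group notions are matched up consistently — whether one works with $\FF$-coefficients, using the $\FF$-group structure coming from $\FF\le\FF(A,R)$ together with the fact that the ring maps $\phi'$ fix $\BZ_p$ and hence $\tilde\phi$ fixes the free pro-$p$ part, or coefficient-free, realising $H$ as a quotient $\FF(X)/\rad$. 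Once the framework is fixed, the discrimination established above yields the irreducible coordinate group, completing the proof.
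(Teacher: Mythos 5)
Your proposal is correct and follows essentially the same route as the paper: restrict the discriminating family from Theorem \ref{thm:disc} to the abstract group $H$, then obtain continuity from the fact that the topology of a finitely generated pro-$p$ group is determined by its group structure — your ``Serre's theorem'' route is exactly the paper's citation of \cite[Theorem 4.3.5]{Wil} — and conclude via Theorem \ref{thm:un}. Your extra care about matching the coefficient conventions when invoking Theorem \ref{thm:un} is a reasonable point that the paper leaves implicit, but it does not change the argument.
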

\begin{proof}
From Theorem \ref{thm:disc}, it follows that the abstract group $H$ is fully residually $\FF$. Since $\FF$ is the free pro-$p$ group and the pro-$p$ topology of the finitely generated pro-$p$ group $H$ is uniquely determined by the group structure (see \cite[Theorem 4.3.5]{Wil}), it follows that the discriminating family is in fact continuous.
\end{proof}

\begin{rem}
As we mentioned in the introduction, for the class $\mathcal L$  of finitely generated pro-$p$ subgroups of sequences of extensions of centralisers of the free pro-$p$ group  introduced in \cite{KZ}, it is not known whether  $\mathcal L$ contains only fully residually free pro-$p$ groups. In fact, there is a natural epimorphism $\pi$ from extensions of centralisers of the free pro-$p$ group to pro-$p$ subgroups of the free pro-Hall $\BZ_p[t]^{bin}$-group. For example,
$$
H=\langle \FF(a,b), s \mid [s, C(w(a,b))]=1 \rangle \to \langle a,b, w(a,b)^t \rangle_{\FF} < \FF(\{a,b\}, {\BZ_p[t]}^{bin}).
$$
Moreover, iterated sequences of extensions of centralisers are fully residually $\FF$ if and only if the natural epimorphism $\pi$ is an isomorphism. Indeed, the family of homomorphisms $\phi_\alpha: \BZ_p[t]\to \BZ_p$ induced by the maps $t\mapsto \alpha$ is discriminating, here $\alpha$ runs over non-trivial elements of $\BZ_p$. By Theorem \ref{thm:disc}, the family $\{\phi_\alpha\}$ induces a discriminating family $\{\tilde \phi_\alpha\}$ from the group $\FF(X, \BZ_p[t]^{bin})$ to $\FF(X, \BZ_p)$ that maps $w^t$ to $w^\alpha$. On the other hand, any homomorphism $\psi$ from $H$ to $\FF$ satisfies that $\psi(s)\in C(w)=\langle w^\alpha\mid \alpha\in \BZ_p\rangle$. Therefore,  $\psi$ extends to a homomorphism $\tilde \phi_\alpha$ for some $\alpha\in \BZ_p$ and the following diagram is commutative:
$$
\xymatrix@C3em{
H \ar[d]_{\psi} \ar@{->>}[r]  &  \langle a,b, w(a,b)^t \rangle_{\FF}\ar[dl]^{\tilde \phi_\alpha}\\
\FF(\{a,b\}, \BZ_p)
}
$$
In particular,  if $H$ is fully residually $\FF(\{a,b\}, \BZ_p)$, then by definition for every $h\in H$ there exists $\psi:H\to \FF(\{a,b\}, \BZ_p)$ injective on $h$ and by the commutativity of the diagram it follows that $\pi(h)$ is a non-trivial element of $\langle a,b, w(a,b)^t \rangle_{\FF}$. We conclude that if $H$ is fully residually $\FF(\{a,b\}, \BZ_p)$, then $\pi$ is also injective and so an isomorphism.
\end{rem}

Every finitely generated pro-$p$ subgroup of $\FF(A, R^{bin})$, where $R$ belongs to the universal class of $\BZ_p$ (in the language with constants), is discriminated by the free pro-$p$ group. We believe that the converse also holds and formulate the following conjecture.

\begin{conj}\label{conj:main}\
\begin{enumerate}
\item
Let $G$ be a finitely generated fully residually $\FF$ pro-$p$ group, then there exists a finitely generated binomial $\BZ_p$-algebra $R=R(G)$ from the universal class of $\BZ_p$ so that $G$ embeds into the free pro-Hall $R$-group $\FF(A, R)$.
\item
The universal class of $\BZ_p$ contains a $\BZ_p$-ring $S$ so that any finitely generated fully residually $\FF$ pro-$p$ group embeds into the free pro-Hall $S$-group $\FF(A, S)$. The ring $S$ is a universal ring for the class of finitely generated $\BZ_p$-rings from the universal class of $\BZ_p$, that is $S$ is a $\BZ_p$-ring, it is fully residually $\BZ_p$ and every finitely generated $\BZ_p$-ring from the universal class of $\BZ_p$ is a subring of $S$.
\end{enumerate}
\end{conj}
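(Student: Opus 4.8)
The plan is to follow the strategy of the discrete theorem of Kharlampovich--Miasnikov (and, by other means, Sela and Champetier--Guirardel) identifying Lyndon's group $F^{\BZ[t]}$ as a universe for all limit groups. That argument splits into two stages: a \emph{structural} result, that every finitely generated fully residually free group is obtained from a free group by a finite sequence of extensions of centralisers, and an \emph{embedding} result, that each such iterated extension embeds into $F^{\BZ[t]}$. I would try to reproduce both stages in the category $\Gr(p)$, with $F^{\BZ[t]}$ replaced by $\FF(A,R)$ for a suitable finitely generated binomial $\BZ_p$-algebra $R$, and with the free extensions of centralisers performed in $\Gr(p)$ as in the class $\cL$ of \cite{KZ}.

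For the embedding stage I would build $R=R(G)$ inductively along the tower of extensions of centralisers witnessing $G$. One starts with $R_0=\BZ_p$, for which $\FF(A,\BZ_p)=\FF$ by Remark~\ref{rem:hallprop}. At the $i$-th step one adjoins to a pro-$p$ subgroup the extension of the centraliser $C(w_i)$ of a single element; as recorded in the Remark above (on the class $\cL$), this extension maps under the canonical epimorphism $\pi$ onto the pro-$p$ subgroup $\langle\dots,w_i^{t_i}\rangle_\FF$ of the free pro-Hall $R_{i-1}[t_i]^{bin}$-group, and $\pi$ is an isomorphism precisely when the extension is fully residually $\FF$. Setting $R_i=(R_{i-1}[t_i])^{bin}$, Lemma~\ref{lem:exthombin} keeps $R_i$ binomial, while the specialisations $t_i\mapsto\alpha$ with $\alpha\in\BZ_p$, the inductive hypothesis that $R_{i-1}$ is fully residually $\BZ_p$, and Lemma~\ref{lem:discbin} keep $R_i$ finitely generated (as a binomial $\BZ_p$-algebra) and fully residually $\BZ_p$, hence in the universal class of $\BZ_p$. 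Composing the $\FF$-embeddings produced by Theorem~\ref{thm:disc} at each level would then yield $G\hookrightarrow\FF(A,R)$ with $R=R_k$.

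The genuine difficulty is the structural stage, and I expect it to be the main obstacle. This is exactly the point left open by Kochloukova--Zalesskii: their class $\cL$ is \emph{defined} by iterated extensions of centralisers, and it is not known that it captures \emph{all} finitely generated fully residually free pro-$p$ groups. In the discrete setting the structure theorem rests on Makanin--Razborov diagrams, Merzlyakov-type formal solutions, and the Rips--Sela analysis of actions on $\BR$-trees together with length functions --- precisely the combinatorial and geometric machinery that, as the introduction stresses, has no known pro-$p$ counterpart. I would therefore attempt a purely logical substitute: invoke Theorem~\ref{thm:un}, which gives $G\equiv_{\forall,\FF}\FF$ and an $\FF$-embedding of $G$ into an ultrapower of $\FF$, and try to develop a pro-$p$ analogue of test sequences or of quantifier elimination to extract from this equivalence a finite exponential presentation of $G$ over some $R$. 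Whether such an analogue exists is unclear, and this is where I expect the argument to stall.

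Finally, for part~(2) I would construct the universal ring $S$ as a directed colimit of the finitely generated $\BZ_p$-algebras in the universal class of $\BZ_p$, assembled by a Fra\"iss\'e-type amalgamation; the inclusion $R<S$ then induces $\FF(A,R)<\FF(A,S)$, so that $G\hookrightarrow\FF(A,R)\hookrightarrow\FF(A,S)$ for every $G$ as in part~(1). The two points to verify are an amalgamation property for the class (preserving discrimination by $\BZ_p$) and that the colimit itself remains fully residually $\BZ_p$. The subtlety is that $\BZ_p$ is uncountable, so the usual countable Fra\"iss\'e construction must be adapted; alternatively one may seek $S$ inside a suitable reduct of an ultrapower of $\BZ_p$, at the cost of checking that such an $S$ can be chosen fully residually $\BZ_p$ rather than merely universally equivalent to it.
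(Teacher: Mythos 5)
The statement you were given is Conjecture~\ref{conj:main}; the paper contains no proof of it, and indeed the entire point of the note is to \emph{formulate} this conjecture after establishing the one unconditional direction (Theorem~\ref{thm:disc} and its corollary: finitely generated pro-$p$ subgroups of $\FF(A,R^{bin})$ are fully residually $\FF$). So there is no argument in the paper to compare yours against line by line; what can be judged is whether your programme closes the gap, and it does not --- as you yourself say, it stalls at the structural stage. That admission is accurate: the analogue of the Kharlampovich--Miasnikov/Sela decomposition is precisely what is missing, and Theorem~\ref{thm:un} by itself gives only $G\equiv_{\forall,\FF}\FF$ and an embedding into an ultrapower of $\FF$, from which no exponential (Hall $R$-) presentation of $G$ is known to be extractable; moreover, since it is not known whether $\FF$ is equationally Noetherian (see the Remark after Theorem~\ref{thm:un}), even the equivalence of conditions 1)--5) is unavailable, so the logical substitute you propose has no foothold.

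Two further points in your outline are weaker than you present them. First, your embedding stage is circular: by the paper's own remark on the class $\cL$ of \cite{KZ}, the epimorphism $\pi$ from an extension of a centraliser onto the pro-$p$ subgroup $\langle a,b,w^t\rangle_{\FF}<\FF(\{a,b\},\BZ_p[t]^{bin})$ is an isomorphism \emph{if and only if} the extension is fully residually $\FF$ --- and whether iterated extensions of centralisers of $\FF$ are fully residually $\FF$ is exactly the open question of \cite{KZ}. So even granting the structural stage, you cannot push the tower into $\FF(A,R_i)$ without first settling that residual property; the hypothesis that $G$ itself is fully residually $\FF$ does not make the ambient tower so. Second, in part (2) your directed-colimit construction of $S$ does not preserve discrimination: a homomorphism injective on a finite subset of some stage must be defined on the whole colimit, and there is no universal property here playing the role that Lemma~\ref{lem:exthombin} plays for binomial closures; the ultrapower alternative yields only universal equivalence to $\BZ_p$, which for a non-finitely-generated $S$ is strictly weaker than being fully residually $\BZ_p$ (compare the paper's example of the Henselisation of $\BZ$, which is universally equivalent to $\BZ_p$ while $\FF(A,S^{bin})$ fails to be pro-$p$). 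In short: your proposal is a reasonable research plan that correctly mirrors the paper's intended analogy with $F^{\BZ[t]}$ and honestly flags the main obstruction, but it is not a proof, and the statement remains a conjecture in the paper as well.
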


\end{document}